\newtheorem{prop}{Proposition}
\newtheorem{lem}{Lemma}
\newtheorem{theor}{Theorem}
\newtheorem{cor}{Corolary}
\newtheorem{rem}{Remark}
\newcommand{\N}{\mathbb{N}}
\newcommand{\Z}{\mathbb{Z}}
\DeclareMathOperator{\rank}{rk}
\begin{document}

\author{Andrei Alpeev  \footnote{Euler Mathematical Institute at St. Petersburg State University, alpeevandrey@gmail.com}}
\title{Examples of measures with trivial left and non-trivial right random walk tail boundary}

\maketitle
\begin{abstract}
In early 80's Vadim Kaimanovich presented a construction of a non-degenerate measure, on the standard lamplighter group, that has a trivial left and non-trivial right random walk tail boundary. We show that examples of such kind are possible precisely for amenable groups that have non-trivial factors with ICC property.
\end{abstract}

\epigraph{To the memory of my dear teacher Anatoly Moiseevich Vershik.}

\section{Introduction}
Let $G$ be a countable group and $\nu$ be a probability measure on $G$.  A measure on $G$ is called {\em non-degenerate} if its support generates $G$ as a semigroup. 
The $\nu$-random walk on $G$ is defined in the following way. First let $(X_i)_{i=1}^{\infty}$ be the i.i.d. process with distribution $\nu$. We set $Z_i = X_1 \cdot \ldots \cdot X_i$. Process $(Z_i)$ is called the right $\nu$-random walk on $G$. Similarly, we can define the left random walk by setting $Z_i' = X_i \cdot \ldots \cdot X_1$.  
By default, {\em random walk} will mean right random walk. We will restrict ourselves to non-degenerate measures on groups.
If $\nu$ is a measure on a countable group $G$, we may define an opposite measure $\nu^{-1}$ by $\nu^{-1}(g) = \nu(g^{-1})$.
It is trivial to see that instead of left random walks, we may consider right random walks with opposite measures. 
The {\em tail boundary} $\partial(G, \nu)$ or the {\em tail subalgebra} of random walk $(Z_i)$ is defined as the intersection $\bigcap_j \sigma(Z_j, Z_{j+1}, \ldots)$, where $\sigma(Z_j, Z_{j+1}, \ldots)$ denotes the minimal $\sigma$-algebra under which all variables $Z_j, Z_{j+1}, \ldots$ are measurable. We will habitually write ``subalgebra'' instead of ``$\sigma$-subalgebra''.
Pair $(G,\nu)$ (or, abusing notation, measure $\nu$ itself) is called Liouville if the tail boundary of $\nu$-random walk on $G$ is trivial. 
One of the fundamental questions of asymptotic theory of random walks is whether a measure on a group is Liouville. Another notion of boundary is that of the {\em Poisson boundary}, it is defined as the invariant-set subalgebra  of the process $(Z_i)_{i \in \N}$ under the time-shift action; in the setting of the random walk on group with non-degenerate measure, the Poisson-Furstenberg boundary coincides with the tail boundary (see \cite{KaVe83}, \cite{Ka92}), so we will use these notions interchangeably.
Due to the Kaimanovich-Vershik entropy criterion for boundary triviality \cite{KaVe83}, we have that if a measure $\nu$ on $G$ has finite Shannon entropy (defined by $H(\nu) = -\sum_{g \in G}\nu(g) \log\nu(g)$, assuming $0 \log 0 = 0$), then left and right $\nu$-random walks have trivial tail boundaries simultaneously. Surprisingly, this is not the case if the finite entropy assumption is waived: in \cite{Ka83} Kaimanovich constructed an example of a measure on the standard lamplighter group $\Z/2\Z \wr \Z$ such that the left random walk has trivial tail boundary, while the right random walk has non-trivial.
The purpose of the present note is to explore which countable groups admit examples akin to that of Kaimanovich. Our main result is the following:

\begin{theor}
Let $G$ be a countable group. There is a non-degenerate probability measure $\nu$ on $G$ with trivial left and non-trivial right random walk tail boundaries iff $G$ is amenable and has a non-trivial ICC factor-group. Moreover, the action of maximal ICC factor on the boundary is essentially free.
\end{theor}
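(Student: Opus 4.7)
The plan splits naturally into the two directions of the biconditional. For necessity, assume $\nu$ has trivial left and non-trivial right tail. Trivial left tail is equivalent to the right $\nu^{-1}$-random walk being Liouville, and $\nu^{-1}$ is non-degenerate whenever $\nu$ is. The Furstenberg/Kaimanovich--Vershik characterization of amenability via the existence of a non-degenerate Liouville measure forces $G$ to be amenable. For the ICC factor condition I would argue contrapositively: if every non-trivial factor of $G$ has a non-trivial FC-center, then iterating the hyper-FC-series (transfinitely, if needed) shows $G$ is FC-hypercentral. A theorem of Jaworski (a transfinite Choquet--Deny) then implies that every non-degenerate measure on such a $G$ has trivial Poisson, hence trivial tail, boundary, contradicting non-triviality of the right tail. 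Thus some non-trivial quotient must be ICC; one may take $G/N$ with $N$ the hyper-FC-center.

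For sufficiency, let $G$ be amenable with $N \triangleleft G$ such that $G/N$ is a non-trivial ICC group. The construction proceeds in two stages. First, on $G/N$, build a non-degenerate measure $\bar\nu$ with trivial left and non-trivial right tail, generalizing Kaimanovich's 1983 lamplighter example. The ICC property provides infinitely many conjugates of any non-identity element to serve as ``lamps'' encoding persistent right-tail information, while amenability of $G/N$ (inherited from $G$) allows the driving measure to be made left-Liouville via F\o lner-type averaging. Second, lift $\bar\nu$ to a non-degenerate $\nu$ on $G$. Since the right $\nu$-random walk projects coordinatewise to the right $\bar\nu$-walk on $G/N$, the non-trivial right tail is inherited. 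A natural candidate for the lift is a convolution $\tilde\nu \star \eta$ where $\tilde\nu$ is a section-lift of $\bar\nu$ and $\eta$ is a sufficiently spread-out symmetric measure on $N$ (Liouville on $N$, which exists by amenability of $N$); one then argues that the left tail of $\nu$ is generated by the left tail of its $G/N$-projection (trivial by construction) together with the $N$-fiber contribution (trivial by choice of $\eta$).

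The main obstacle is clearly Stage 1 of the sufficiency direction: Kaimanovich's original argument leans heavily on the explicit semidirect product structure of $\Z/2\Z \wr \Z$, and it is not obvious how to reproduce the left/right asymmetry in an abstract amenable ICC group where no distinguished ``pointer'' and ``lamp'' decomposition is given. A secondary difficulty is the lift of Stage 2: preserving left triviality through the extension $1 \to N \to G \to G/N \to 1$ is not a purely formal operation, and one must compare left tail $\sigma$-algebras before and after projection, presumably via a conditional entropy or martingale argument that exploits the independence between the $N$-fiber and base components of $\nu$.
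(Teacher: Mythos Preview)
Your necessity direction is exactly what the paper does: Liouville for $\nu^{-1}$ forces amenability via Kaimanovich--Vershik, and non-Liouville for $\nu$ forces a non-trivial ICC factor via Jaworski's result on FC-hypercentral groups.

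For sufficiency, however, your two-stage architecture (build on $G/N$, then lift through the extension $1\to N\to G\to G/N\to 1$) diverges from the paper and leaves the two difficulties you name unresolved. The paper avoids both by constructing $\nu$ \emph{directly on $G$}. Fix an epimorphism $\varphi:G\to\Gamma$ onto an ICC group and an enumeration $(c_i)$ of $G$. One builds inductively finite sets $A_i\subset G$, F{\o}lner sets $F_i\subset G$ that are $((A_i\cup\{c_i^{\pm1}\})^{i+1},1/i)$-invariant, and elements $b_i\in G$ chosen so that $\varphi(b_i)$ is a \emph{lock} for $\varphi(D_i^{10i+10})$ in $\Gamma$ (this is the ICC input, following Frisch--Hartman--Tamuz--Vahidi Ferdowsi and Erschler--Kaimanovich). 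A heavy-tailed variable $K$ with $P(K=k)\asymp k^{-5/4}$, coupled with a red/blue coin $Y$, determines $X$: if $K=i$ and $Y$ is red, set $X=c_i$; if blue, take $X$ uniform on $b_iF_i^{-1}$. Then $\nu=\mathrm{law}(X)$ has full support. Left triviality is proved via the criterion $\lVert g*(\nu^{-1})^{*n}-(\nu^{-1})^{*n}\rVert\to 0$: with high probability the $n$-fold product contains a single dominant factor $F_m b_m^{-1}$ with $m>n$, and the F{\o}lner invariance of $F_m$ in $G$ absorbs $g$. Right non-triviality is obtained by defining, in $\Gamma$, a ``peeling'' map $p$ that recovers the prefix before the last lock; the lock property makes $p$ well defined, and the resulting tail-measurable set $\tau$ of successive prefixes has non-degenerate distribution.

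The point is that the F{\o}lner averaging (which gives left triviality) lives in $G$, while the lock combinatorics (which gives right non-triviality) lives in $\Gamma$; the single measure $\nu$ on $G$ carries both, so no lift is needed and your ``secondary difficulty'' never arises. Your ``main obstacle'' --- producing the left/right asymmetry in an abstract ICC group without a lamplighter decomposition --- is handled not by mimicking Kaimanovich's 1983 pointer/lamp mechanism but by the lock elements, whose existence uses only the ICC property.
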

In the process, we also characterize rather explicitly the boundary for our example in the manner of \cite{ErKa19}.

We remind that a group is called an ICC (short for infinite conjugacy classes) if conjugacy class of each nontrivial element of the group is non-trivial. Note that a finitely-generated group lacks a non-trivial ICC factor exactly when it is virtually-nilpotent (=has polynomial growth, due to the famous Gromov theorem), see \cite{DuM56}, \cite{M56}.  


It is well known that amenable groups and only them admit non-degenerate Liouville measures, see Theorems 4.2 and 4.3 from \cite{KaVe83}). It is also well known that all measures on groups without ICC factors are Liouville, see \cite{Ja}, a self-contained proof could be also found in the second preprint version of \cite{Feta19}. 
Thus examples of Kaimanovich type are possible only for amenable groups with non-trivial ICC factors. In the sequel we will show  that for every such group there is a measure of full support with non-trivial right and trivial left random walk boundary. Our construction is based on that of the breakthrough paper \cite{Feta19} of Frish, Hartman, Tamuz and Vahidi Ferdowsi, where a non-Liouville measure was constructed for every group with an ICC factor, combined with the classic construction of a Liouville measure for every amenable group by Kaimanovich and Vershik \cite{KaVe83} and Rosenblatt \cite{Ro81}, although in the proof of non-triviality of boundary we employ the approach similar to that of Ershler and Kaimanovich \cite{ErKa19}.

An interesting consequence of the present result is an explicit construction of a mean on a countable group that is right-invariant but not left-invariant. 
It was shown by Paterson \cite{Pat79}, that for countable groups such examples are possible exactly for FC-groups (i.e. groups where each element has finite conjugacy class).
For a countable group $G$ consider the space $l^{\infty}(G)$, we endow it with the R-representation of $G$ by 
$$( \rho^g f)(h) = f(hg), \text{ for } g,h \in G \text{ and }f \in l^{\infty}(G),$$
and with the L-representation of $G$ by
$$(\lambda^g f)(h) = f(gh).$$

A mean on $l^{\infty}(G)$ is a positive functional of norm $1$. A mean $\mathfrak{m}$ is left-invariant if $\mathfrak{m}(f) = \mathfrak{m}(\lambda^g (f))$ for all $g \in G$ and $f \in l^{\infty}(G)$, and it is right-invariant if $\mathfrak{m}(f) = \mathfrak{m}(\rho^g (f))$, for all $g \in G$ and $f \in l^{\infty}(G)$.
The following observation is made in the work of Kaimanovich and Vershik \cite[Section 6.5]{KaVe83}, see also \cite[Theorem 1.4]{Ka83}:
\begin{prop}
Let $\nu$ be a non-degenerate measure on a countable group $G$ such that the boundary $\partial(G, \nu)$ is non-trivial and $\partial(G, \nu^{-1})$ is trivial. Let $\mathfrak{u}$ be a principal ultrafilter, The functional $\mathfrak{m}$ defined by
$$ \mathfrak{m}(f) = \lim_{i \to \mathfrak{u}} \sum_{g \in G} \nu^{*i}(g) f(g) \text{, for } f \in l^{\infty}(G),$$ is a right-invariant but not left-invariant mean. 
\end{prop}
\begin{cor}
For every amenable group with a non-trivial ICC factor, there is an ``explicit'' right-invariant mean that is not left-invariant.
\end{cor}
Of course, we rely on the Axiom of Choice as we use an ultrafiler, but this is a rather localized use.
\begin{proof}
It is trivial that the funcional defined is a mean. To prove that it is right-invariant, we note that for any $g \in G$ we have, by Proposition \ref{lem: KV invariance criterion}, that 
$$\lim_{i \to \infty} \lVert g * (\nu^{-1})^{*i} -  (\nu^{-1})^{*i} \rVert = 0,$$
and so
$$\lim_{i \to \infty} \lVert \nu^{*i} * g -  \nu^{*i} \rVert = 0.$$
Now we get 
\begin{gather*}\mathfrak{m}(\rho^g f) = \lim_{i \to \mathfrak{u}} \sum_{h \in G} (\rho^g f)(h) \nu^{*i}(h) = \lim_{i \to \mathfrak{u}} \sum_{h \in G} f(hg), \nu^{*i}(h) \\=  \lim_{i \to \mathfrak{u}} \sum_{h \in G} f(h) (\nu^{*i} * g)(h) = \lim_{i \to \mathfrak{u}} \sum_{h \in G} f(h) (\nu^{*i})(h) = \mathfrak{m}(f).
\end{gather*}

To prove that $\mathfrak{m}$ is not left-invariant, we observe that, since the boundary $\partial(G,\nu)$ is non-trivial, there is a non-constant bounded $\nu$-harmonic function $f$, that is 
$$f(g) = \sum_{h \in G} f(gh) \nu(h)\text{, for all } g\in G.$$
Now we compute:
$$\mathfrak{m}(\lambda^g(f)) = \lim_{i \to \mathfrak{u}} f(gh) \nu^{*i}(h) = f(g),$$
and since $f$ is non-constant, we trivially get that $\mathfrak{m}$ is not left-invariant.

\end{proof}

{\bf Acknowledgements.} I'm thankful to Vadim Kaimanovich for helpful discussions.
I thank Anna Erschler for suggestions on the manuscript.
Prof A. M. Vershik constantly pushed me to study asymptotic behaviour of Markov chains, for which I'm grateful. Sadly, he passed away on February 14th, 2024.
This research project was started during my postdoc in the Einstein Institute of Mathematics at the Hebrew University of Jerusalem, supported by the Israel Science Foundation grant 1702/17, and finished in the Euler Mathematical Institute at Saint-Petersburg State University. The work is supported by Ministry of Science and Higher Education of the Russian Federation, agreement № 075–15-2019-1619.

\section{The maximal ICC factor}
Let $G$ be a group. Let $\nu$ be a non-degenerate probability measure on $G$, we will denote $\partial(G,\nu)$ the Poisson-Furstenberg(= tail in this case) boundary of the $\nu$-random walk.

A group is said to have the {\em infinite conjugacy class property (ICC)} if every its non-trivial element has infinite conjugacy class.
See e.g. \cite[Section 5B]{ErKa19} for the following:
\begin{lem}
Every group has a maximal ICC factor-group.
\end{lem}
Let $\varphi : G \to \Gamma$ be an epimorphism of groups. 
It is not hard to see that we have an induced factor-map of tail (and hence, Poisson-Furstenberg) boundaries $\varphi_{\partial}: \partial(G, \nu) \to \partial(\Gamma, \varphi_*(\nu))$.

The next theorem follows easily from \cite[Proposition 5.1, Proposition 5.11]{ErKa19} and the fact that the kernel of the canonical map onto the maximal ICC factor is the so-called hyper-FC-center.
\begin{theor}
Assume $G$ is a countable group, $\nu$ --- a non-degenerate measure on $G$ and $\Gamma$ is the maximal ICC factor of $G$ together with the canonical epimorphism $\varphi: G \to \Gamma$. The natural map $\varphi_{\partial}: \partial(G, \nu) \to \partial(\Gamma, \varphi_*(\nu))$ is an isomorphism.
\end{theor}

In this note our goal it to construct for any countable amenable group $G$ with a non-trivial ICC factor, a probability measure of full support $\nu$ such that $\partial(G ,\nu)$ is non-trivial and $\partial(G, \nu^{-1})$ is trivial.
Note that it is enough to do this for the maximal ICC factor, since we can take any lift-measure $\nu$ (i.e. $\varphi_*(\nu) = \nu'$) of full support on $G$, and in view of the theorem above, $\partial(G, \nu)$ is naturally isomorphic to $\partial(\Gamma, \nu')$, and $\partial(G, \nu^{-1})$ is naturally isomorphic to $\partial(\Gamma, \nu'^{-1})$.
In the sequel we will assume that the group considered is a non-trivial countable ICC amenable group.

\section{A process with heavy tail}
Let $K$ be an integer-valued random variable such that $P(K = k) = (1/c) k^{-5/4}$, for $k \in \N$. Consider an i.i.d. process $(K_i)_{i \in \N}$ (each $K_i$ has the same distribution as $K$). A number $i \in \N$ is a record-time if $K_i \geq K_j$ for all $j < i$, and the value $(K_i)$ is a record-value; we will call pair $(i,K_i)$ a record, and usually denote it, abusing notation a bit, as $K_i$. A record $K_i$ is simple if $K_i \neq K_j$ for all record-times $j$ such that $j \neq i$.

The following lemma could be found in \cite[Lemma 2.6]{Feta19} and \cite[Sections 2.B and 2.C]{ErKa19}.
\begin{lem}
For almost every realization of the random process $(K_i)$, there is $i_0$ such that 
\begin{enumerate}
\item for all $i \geq i_0$ we have $\max\{ K_1, \ldots K_i\} > i$;
\item all record-times starting from  $i_0$ are simple.
\end{enumerate}
\end{lem}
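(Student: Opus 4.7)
The plan is to prove both assertions by Borel--Cantelli, after establishing that the tail of $K$ is polynomially large. Since $P(K=k) \asymp k^{-5/4}$, by comparison with $\int_i^\infty x^{-5/4}\,dx$ we obtain constants $c_1,c_2 > 0$ such that $c_1 i^{-1/4} \leq P(K > i) \leq c_2 i^{-1/4}$ for all $i \geq 1$. This two-sided control is all that is really needed.

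For the first claim, let $M_i = \max(K_1,\ldots,K_i)$. By independence,
\[
P(M_i \leq i) = P(K \leq i)^{i} = \bigl(1 - P(K > i)\bigr)^{i} \leq \bigl(1 - c_1 i^{-1/4}\bigr)^{i} \leq \exp\bigl(-c_1 i^{3/4}\bigr),
\]
which is summable in $i$. Borel--Cantelli then yields that almost surely only finitely many indices $i$ satisfy $M_i \leq i$, which is exactly the first assertion.

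For the second claim, note that a record-time $j$ fails to be simple precisely when $K_j = M_{j-1}$ (with $M_0 := 0$, so the first record is always simple). Call this event $B_j$. Pick any $\varepsilon \in (0, 4/5)$ and set $T_j = j^{4-\varepsilon}$. On $\{M_{j-1} \geq T_j\}$, conditioning on the value $m$ of $M_{j-1}$ and using independence of $K_j$ gives $P(K_j = m \mid M_{j-1} = m) = P(K=m) \leq C m^{-5/4} \leq C T_j^{-5/4}$, so $P(B_j \cap \{M_{j-1} \geq T_j\}) \leq C j^{-(4-\varepsilon)(5/4)}$, summable in $j$. On the complementary event, the same computation as in the first part gives
\[
P(M_{j-1} < T_j) \leq \bigl(1 - c_1 T_j^{-1/4}\bigr)^{j-1} \leq \exp\bigl(-c_1 (j-1)\, j^{-(4-\varepsilon)/4}\bigr) \leq \exp\bigl(-c_3 j^{\varepsilon/4}\bigr),
\]
which is also summable. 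Hence $\sum_j P(B_j) < \infty$ and Borel--Cantelli delivers the second assertion.

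The main technical point is the second claim, where one needs to balance two competing estimates: the probability that the running maximum has not yet grown to a polynomial threshold, and the probability of hitting an exact value of order that threshold. The choice $T_j = j^{4-\varepsilon}$ sits comfortably below the typical growth rate $M_{j-1} \sim j^{4}$ (consistent with $P(K>x) \asymp x^{-1/4}$) while still being large enough that $T_j^{-5/4}$ is summable, which is what makes the argument go through. Everything else is a routine application of Borel--Cantelli.
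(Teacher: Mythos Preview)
Your argument is correct. The paper does not give its own proof of this lemma but simply defers to \cite[Lemma 2.6]{Feta19} and \cite[Sections 2.B and 2.C]{ErKa19}; your self-contained Borel--Cantelli argument---using the tail estimate $P(K>i)\asymp i^{-1/4}$ for part (1), and for part (2) splitting the event $\{K_j=M_{j-1}\}$ according to whether $M_{j-1}$ has exceeded a threshold $T_j=j^{4-\varepsilon}$---is the natural approach and is in the same spirit as those references.
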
 

\begin{rem}
Instead of this particular process we can use an arbitrary process with simple records as described in \cite[Sections 2.B and 2.C]{ErKa19}, with minor adjustments of parameters in the construction that follows.
\end{rem}

We have a random variable $K$, let us construct coupled random variable $Y$. If $K = k_0$, we set $Y = ``red''$ with probability $2 ^ { -k_0}$ and $Y = ``blue''$ with probability $1 - 2^{-k_0}$. Now consider the process $(K_i, Y_i)_{i=1}^{\infty}$ such that the sequence of independent pairs $(K_i, Y_i)$ distributed as we described.

Consider a trajectory of the random process $(K_i,Y_i)_{i \in \N}$.  We will say that this trajectory {\em stabilizes} if there is $i_0$ such that 
\begin{enumerate}
\item for all $i \geq i_0$ we have $\max\{ K_1, \ldots K_i\} > i$;
\item all record-times $i$ starting from  $i_0$ are simple and $Y_i = ``blue''$ for these record-times.
\end{enumerate}
We will call the smallest such $i_0$ (if it exists) the {\em stabilization time}.
Now it is easy to extend the previous lemma in the following way using the Borel-Cantelli lemma:
\begin{lem}\label{lem: basic stabilization}
Almost every realization of the random process $(K_i,Y_i)_{i \in \N}$ stabilizes.
\end{lem}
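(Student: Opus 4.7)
My plan is to combine Lemma 1 with a Borel-Cantelli argument applied to the colors at record-times. By Lemma 1 applied to the marginal $(K_i)$-process, for almost every realization there is $i_0$ for which conditions (1) and (2) of that lemma hold. Enumerating the record-times as $T_1 < T_2 < \ldots$, this means that from some $N$ onward every $T_n$ is a simple record-time, so the values $K_{T_n}$ are strictly increasing; since they take positive integer values, this forces $K_{T_n} \geq K_{T_N} + (n - N)$ for $n \geq N$, and hence $\sum_{n} 2^{-K_{T_n}} < \infty$ almost surely.

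Next, I will use the coupling structure to apply Borel-Cantelli. Conditional on the entire $(K_i)$-process, the variables $(Y_i)$ are independent, and the conditional probability that $Y_i$ is red equals $2^{-K_i}$; the record-times $T_n$ are measurable with respect to $(K_i)$. Thus, conditional on $(K_i)$, the events that $Y_{T_n}$ is red are independent Bernoullis whose probabilities $2^{-K_{T_n}}$ sum to a finite value almost surely. The first Borel-Cantelli lemma applied inside this conditional measure shows that almost surely only finitely many $Y_{T_n}$ are red; equivalently, there is $i_1$ such that every record-time $i \geq i_1$ has $Y_i$ blue. Taking the maximum of $i_0$ and $i_1$ then produces a stabilization time.

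The main subtlety is the conditional Borel-Cantelli step: one must carefully apply the lemma inside the regular conditional probability given the $\sigma$-algebra generated by $(K_i)$, relying on the fact that the indices $T_n$ depend on $(K_i)$ alone while the $(Y_i)$ are conditionally independent. Once this bookkeeping is done, the whole argument reduces to the elementary observation that strictly increasing positive integers grow at least linearly, which makes the tail of $\sum_n 2^{-K_{T_n}}$ geometrically small after the onset of simple records. A fully unconditional variant is also possible, computing $\sum_i \Pr(i \text{ is a record-time and } Y_i \text{ is red})$ directly via $\sum_k (1/c) k^{-5/4} 2^{-k} \cdot \Pr(K \leq k)^{i-1}$ and exploiting the $2^{-k}$ factor for summability in $i$, but the conditional route seems cleanest given that Lemma 1 is already available.
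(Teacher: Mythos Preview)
Your proof is correct and follows exactly the approach the paper indicates: the paper merely says that Lemma~\ref{lem: basic stabilization} is an easy extension of Lemma~1 ``using the Borel--Cantelli lemma'', and your argument supplies precisely those details (strict increase of integer record-values once simplicity begins, conditional independence of the colors given $(K_i)$, and the first Borel--Cantelli lemma). The conditional bookkeeping you flag is indeed the only point requiring care, and you handle it correctly.
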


\section{Construction}

Let $G$ be a group, and $A$ be a subset of $G$. We will say that a finite subset $F$ of $G$ is $(A,\delta)$-invariant if $\lvert aF \setminus F\rvert < \delta \lvert F\rvert$ for all $a \in A$. Naturally, in an amenable group there is an $(A,\delta)$-invariant set for any choice of finite $A \subset G$ and $\delta > 0$.

Let $\Gamma$ be a group. Let $A$ be a finite subset of $\Gamma$. We will say that an element $b \in \Gamma$ is an $A$-lock if for any $a'_1, a'_2, a''_1, a''_2$ from $A$, equality $a'_1ba'_2 = a''_1 b a''_2$ implies $a'_1 = a''_1$ and $a'_2 = a''_2$, and sets $A$ and $AbA$ are disjoint.

The proof of the following for amenable groups could be found in \cite[Proposition 2.5]{Feta19} and in the general case in \cite[Poposition 4.25]{ErKa19}.
\begin{lem}
If $\Gamma$ is a non-trivial ICC group, then for every finite subset $A$ of $\Gamma$ there is an $A$-lock. 
\end{lem}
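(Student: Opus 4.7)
The plan is to recast the two defining conditions of an $A$-lock as avoidance conditions on $b$, and then invoke B.~H. Neumann's classical coset-covering lemma (a group is never the union of finitely many cosets of subgroups of infinite index).

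First I would algebraically simplify the cancellation condition. Setting $u := (a_1'')^{-1} a_1' \in A^{-1}A$ and $v := a_2'' (a_2')^{-1} \in A A^{-1}$, the equality $a_1' b a_2' = a_1'' b a_2''$ is equivalent to $ub = bv$, i.e.\ $b^{-1} u b = v$. If $u = e$ then automatically $v = e$ (and vice versa), in which case $a_1' = a_1''$ and $a_2' = a_2''$ as required; so the cancellation condition is equivalent to demanding
\[
b^{-1} u b \neq v \quad \text{for every } (u,v) \in (A^{-1}A \setminus \{e\}) \times (A A^{-1} \setminus \{e\}).
\]
The disjointness requirement $A \cap AbA = \emptyset$ translates directly into $b \notin A^{-1} A A^{-1}$, a finite set.

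Next I would invoke the ICC hypothesis. For fixed $u \neq e$ the conjugacy class of $u$ is infinite, so the centralizer $C_\Gamma(u) := \{g \in \Gamma : g^{-1} u g = u\}$ has infinite index in $\Gamma$. For each $v \in AA^{-1} \setminus \{e\}$, the solution set $\{b \in \Gamma : b^{-1} u b = v\}$ is either empty (when $v$ is not conjugate to $u$) or a single right coset of $C_\Gamma(u)$. Consequently the ``bad'' set where the cancellation condition fails is a finite union, indexed by the finite set $(A^{-1}A \setminus \{e\}) \times (A A^{-1} \setminus \{e\})$, of cosets of infinite-index subgroups of $\Gamma$.

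By Neumann's lemma this bad set cannot be all of $\Gamma$; in fact its complement has the same cardinality as $\Gamma$, which is infinite because ICC implies $\Gamma$ is infinite. Removing the additional finite set $A^{-1} A A^{-1}$ still leaves (infinitely many) candidates, and any element $b$ in the remaining set is an $A$-lock. The one step that needs actual input beyond routine bookkeeping is Neumann's lemma; everything else is a reformulation made possible by the identification of the bad set with a finite union of centralizer cosets, which in turn is exactly where the ICC hypothesis is used.
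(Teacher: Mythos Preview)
Your argument is correct. The paper itself does not prove this lemma: it simply cites \cite[Proposition~2.5]{Feta19} for the amenable case and \cite[Proposition~4.25]{ErKa19} in general. Your route---rewriting $a_1' b a_2' = a_1'' b a_2''$ as $b^{-1}ub=v$ with $u\in A^{-1}A$, $v\in AA^{-1}$, identifying the solution set for each pair $(u,v)$ with $u\neq e$ as a (possibly empty) right coset of the infinite-index centralizer $C_\Gamma(u)$, and then applying Neumann's coset-covering lemma---is the standard mechanism behind those cited proofs, so there is no substantive divergence to report.

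One cosmetic remark: instead of first arguing that the complement of the centralizer-coset union is infinite and then deleting the finite set $A^{-1}AA^{-1}$, you can fold that finite set directly into the Neumann step by treating each of its elements as a coset of the trivial subgroup, which has infinite index because a nontrivial ICC group is infinite. This avoids the separate cardinality claim.
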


We will construct the measure $\nu$ on $G$ for the main theorem as a distribution of a certain $G$-valued random variable $X$ coupled with $(K,Y)$.

Let $(c_i)$ be any sequence enumerating all the elements of $G$. We will construct the variable in an iterative manner, together with finite subsets $A_i, F_i, D_i \subset G$, and elements $b_i \in G$ for each $i \in \N$.

\begin{enumerate}
\item Let $A_1 = \{e\}$. 
\item For each $i \geq1$ we choose $F_i$ to be $((A_i \cup \{ c_i\} \cup \{c_i^{-1}\})^{i+1}, 1/i)$ - invariant. 
\item We denote $D_i = F_i^{-1} \cup F_i \cup A_i \cup \{c_i\} \cup\{c_i^{-1}\}$, for $i \in \N$.
\item For each $i \geq 1$ we choose $b_i$ to be a $D_i^{10i + 10}$-lock. 
\item For each $i \in\N$ we set $A_{i+1} = D_{i}\cup b_i F_i^{-1} \cup F_i b_i^{-1}$.
\end{enumerate}

So we start with step 1, proceed to step $5$ and then carry on repeating steps 2--5.

We are ready to construct a random variable $X$ that is coupled to $(K, Y)$. Assume $K = i$. If $Y = ``red"$, we set $X = c_i$. Otherwise let $X$ be uniformly distributed in $b_i F_i^{-1}$. 

So let $\nu$ be the distribution of $X$. It is trivial that the support of $\nu$ is $G$. It would be convenient at times for us to look at the i.i.d. process $(X_1, X_2, \ldots)$ as coupled with the i.i.d. process $((K_1, Y_1, X_1),(K_2, Y_2, X_2), \ldots)$.

The following proposition appears as a part of Theorem 4.2 from \cite{KaVe83}:
\begin{prop}\label{lem: KV invariance criterion}
Let $\nu$ be a non-degenerate measure on a countable group $G$. The Poisson-Furstenberg boundary of $\nu$-random walk on $G$ is trivial iff for every $g \in G$ we have $\lVert g * \nu^{*n} - \nu^{*n} \rVert \to 0$.
\end{prop}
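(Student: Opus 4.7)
Both directions rest on the identity $(P^n f)(g) = \int f \, d(g * \nu^{*n})$, where $P$ denotes the Markov operator $(Pf)(g) = \int f(gx)\,d\nu(x)$. Combined with the left-invariance of total variation this gives
\[
|(P^n f)(g_1) - (P^n f)(g_2)| \leq \lVert f \rVert_\infty \cdot \lVert g_2^{-1}g_1 * \nu^{*n} - \nu^{*n} \rVert.
\]
A basic further ingredient is the monotonicity $a_n(g) := \lVert g * \nu^{*n} - \nu^{*n} \rVert \geq \lVert g * \nu^{*(n+1)} - \nu^{*(n+1)} \rVert$, which follows because $g * \nu^{*(n+1)} - \nu^{*(n+1)} = (g * \nu^{*n} - \nu^{*n}) * \nu$ and convolution with a probability measure is a total-variation contraction; hence the limit $L(g) := \lim_n a_n(g)$ always exists.

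For the ``if'' direction I would invoke the standard fact, established in the earlier parts of \cite{KaVe83}, that on a countable group the Poisson boundary of a non-degenerate random walk is trivial iff every bounded $\nu$-harmonic function is constant. Given such $h$, one has $h = P^n h$ for every $n$, so the displayed estimate forces $|h(g_1) - h(g_2)| \le \lVert h \rVert_\infty \cdot a_n(g_2^{-1}g_1) \to 0$ and $h$ is constant.

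For the ``only if'' direction I would argue by contradiction: assume Poisson triviality but $L(g_0) > 0$ for some $g_0 \in G$. The plan is to extract a non-constant bounded harmonic function from the signed measures $\sigma_n := g_0 * \nu^{*n} - \nu^{*n}$. Because $\lVert \sigma_n * \nu \rVert \le \lVert \sigma_n \rVert$ with $\lVert \sigma_n \rVert \to L(g_0) > 0$, the Hahn-decomposed parts $\sigma_n^\pm$ must become asymptotically preserved by convolution with $\nu$ (the equality case of the contraction). Choosing $f_n \in \ell^\infty(G)$ with $\lVert f_n \rVert_\infty \le 1$ and $\int f_n\,d\sigma_n = \lVert \sigma_n \rVert$, and fixing a free ultrafilter $\mathcal{U}$ on $\N$, I set $F(g) := \lim_{n\to\mathcal{U}} f_n(g)$ and $u(g) := \lim_{n\to\mathcal{U}} (P^n F)(g)$. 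A routine exchange of limits (dominated by $\nu$) shows $Pu = u$, and the equality-case analysis should give $u(g_0) - u(e) > 0$, contradicting triviality. The delicate step, and the main obstacle, is controlling $\int F \, d\sigma_n$: $F$ must track $\mathrm{sgn}(d\sigma_n/d|\sigma_n|)$ well enough on the bulk of $|\sigma_n|$ along the ultrafilter, and this is essentially the $0$--$2$ law of Derriennic and Orey for Markov chains, which is where the real work of \cite[Theorem 4.2]{KaVe83} sits. A shortcut would be to apply Orey's theorem directly to the random walk viewed as an irreducible countable-state Markov chain, which gives the required equivalence between tail triviality and the claimed total-variation convergence.
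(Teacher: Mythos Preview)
The paper does not prove this proposition at all; it is quoted verbatim as ``a part of Theorem~4.2 from \cite{KaVe83}'' and used as a black box, so there is no in-paper argument to compare your sketch against.

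On the substance of your sketch: the ``if'' direction is correct and standard. In the ``only if'' direction there are two soft spots. First, with a free ultrafilter $\mathcal U$ the identity $Pu=u$ is not automatic: $Pu(g)=\lim_{n\to\mathcal U}(P^{n+1}F)(g)$, and for a general free ultrafilter $\lim_{n\to\mathcal U}a_{n+1}$ need not equal $\lim_{n\to\mathcal U}a_n$; one should use a shift-invariant Banach limit instead. Second, and more seriously, the step you flag yourself is a real gap: from $\int f_n\,d\sigma_n=\lVert\sigma_n\rVert$ there is no reason the pointwise ultrafilter limit $F$ satisfies $\int F\,d\sigma_n$ close to $\lVert\sigma_n\rVert$ along $\mathcal U$, since the supports of $\sigma_n$ drift. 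Your diagnosis that the actual content here is the Derriennic--Orey $0$--$2$ law is exactly right, and your proposed shortcut (invoke Orey's theorem for the irreducible countable-state chain to get the equivalence of tail triviality and the total-variation condition) is precisely how \cite{KaVe83} handles it; so the honest version of your ``only if'' argument is simply that citation.
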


\begin{lem}
$\nu^{-1}$ - random walk on $G$ has trivial Poisson-Furstenberg boundary.
\end{lem}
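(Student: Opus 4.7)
The plan is to verify the Kaimanovich--Vershik criterion (Proposition~\ref{lem: KV invariance criterion}) for the measure $\mu := \nu^{-1}$: I need to show $\lVert g\ast\mu^{\ast n}-\mu^{\ast n}\rVert \to 0$ for every $g \in G$. Realize the $\mu$-right random walk as $W_n = Y_1 Y_2 \cdots Y_n$ with $Y_i = X_i^{-1}$, where each $X_i$ is produced from the coupled process $(K_i, Y_i^{\mathrm{color}})$ of Section~2 together with an independent uniform choice inside $b_{K_i}F_{K_i}^{-1}$ whenever $Y_i^{\mathrm{color}} = \mathrm{blue}$. Write $g = c_m$. A quick induction on the definition $A_{i+1} = D_i \cup b_iF_i^{-1}\cup F_ib_i^{-1}$ shows that each set $A_i$ is symmetric, and that $c_j \in A_i$ for every $j < i$; consequently $g \in A_k$ whenever $k > m$.

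By Lemma~\ref{lem: basic stabilization} the process stabilizes almost surely at a finite time $i_0$. For each large $n$, let $N = N(n)$ be the largest record time in $[i_0, n]$. Standard heavy-tail estimates for the record process of $(K_i)$ produce a sequence $k_n \to \infty$ such that the event $E_n := \{N \text{ exists and } K_N \geq k_n\}$ has $\mathbb{P}(E_n) \to 1$. On $E_n$ set $k := K_N$ and decompose $W_n = A\cdot Y_N\cdot B$ with $A = Y_1\cdots Y_{N-1}$ and $B = Y_{N+1}\cdots Y_n$. For each $j < N$, $K_j < k$, so $Y_j \in \{c_{K_j}^{-1}\}\cup F_{K_j}b_{K_j}^{-1}\subseteq A_{K_j+1}\subseteq A_k$; the stabilization estimate $\max\{K_1,\dots,K_N\} > N$ forces $N \leq k-1$. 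Hence $A \in A_k^{k-1}$, and since $A_k$ is symmetric and $g \in A_k$, we obtain $A^{-1}gA \in A_k^{2k-1}$.

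Now couple $W_n$ with $gW_n$ by conditioning on the $\sigma$-algebra $\mathcal{G}$ generated by $(K_i, Y_i^{\mathrm{color}})_i$ together with $\{Y_j : j\neq N\}$. Given $\mathcal{G}$ on $E_n$, the step $Y_N$ is uniform on $F_k b_k^{-1}$, so $W_n\mid\mathcal{G}$ is uniform on $AF_k b_k^{-1}B$ and $gW_n\mid\mathcal{G}$ uniform on $gAF_k b_k^{-1}B$; both sets have cardinality $|F_k|$ since right multiplication by $b_k^{-1}B$ is a bijection of $G$. The conditional total-variation distance therefore equals
\[
\tfrac{1}{2|F_k|}\,\bigl\lvert AF_k \triangle gAF_k\bigr\rvert \;=\; \tfrac{1}{2|F_k|}\,\bigl\lvert F_k \triangle (A^{-1}gA)F_k\bigr\rvert.
\]
Since $A^{-1}gA \in A_k^{2k-1} \subseteq (A_k\cup\{c_k,c_k^{-1}\})^{k+1}\cdot(A_k\cup\{c_k,c_k^{-1}\})^{k+1}$ is the product of two elements of the invariance set $S := (A_k\cup\{c_k,c_k^{-1}\})^{k+1}$, the subadditive Følner inequality $\lvert stF \triangle F\rvert \leq \lvert sF\triangle F\rvert + \lvert tF\triangle F\rvert$, combined with the $(S, 1/k)$-invariance of $F_k$, bounds the numerator by $4|F_k|/k$; hence the conditional TV is at most $2/k$. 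Averaging over $\mathcal{G}$ and using the trivial bound on $E_n^c$ yields $\lVert g\ast\mu^{\ast n}-\mu^{\ast n}\rVert \leq 2/k_n + \mathbb{P}(E_n^c) \to 0$.

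The main obstacle is the interplay of three length scales: the prefix $A$ is a product of up to $k-1$ arbitrary elements of $A_k$ (from $N \leq k-1$), the Følner invariance built into the construction works only for elements of $S$, and the conjugate $A^{-1}gA$ must be expressed as a short product of $S$-elements for the Følner-triangle estimate to be useful. The calculation goes through because $2k-1 \leq 2(k+1)$, exactly matching two copies of the invariance ball. The lock property of $b_k$ plays no role here (every right multiplication of $G$ is automatically a bijection), but will be crucial for the converse lemma establishing non-triviality of the right $\nu$-random walk tail.
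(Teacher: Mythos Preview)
Your proof is correct and follows essentially the same approach as the paper: verify the Kaimanovich--Vershik criterion by conditioning on the position of the (with high probability unique, blue) maximum among $K_1,\dots,K_n$ and exploiting the F\o lner invariance of the corresponding $F_k$ to absorb the left shift by $g$. The paper compresses this into the single decomposition $(\nu^{-1})^{*n}=\sum p_{q',q'',m}\,q'\ast\lambda_{F_m}b_m^{-1}\ast q''+\eta_n$, whereas your coupling formulation makes the same idea explicit and is more careful about the length scales (your observation that $A^{-1}gA$ lands in $S^2$, together with the subadditivity $|stF\triangle F|\le|sF\triangle F|+|tF\triangle F|$, cleanly justifies a step the paper leaves to the reader).
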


\begin{proof}
Let $g$ be fixed. Assuming that $n$ is big enough, the sequence $K_1, \ldots, K_n$ with probability close to $1$ has unique maximal value $m = K_i > n$, and the corresponding $Y_i = ``blue''$; this is a trivial consequence of Lemma \ref{lem: basic stabilization}.
So we have that $(\nu^{-1})^{*n}$ could be decomposed as 
\[
(\nu^{-1})^{*n} = \sum_{q',q'',m} p_{q',q'',m} \cdot q' * \lambda_{F_m} b_n^{-1} q'' + \eta_n, 
\]
where $q', q'' \in A_m^n$, $m>n$, $p_{q',q'',m} \geq 0$, $\lambda_{F_m}$ is the uniform measure on $F_m$, and $\lVert \eta_n \rVert \to 0$ as $n \to \infty$. From this we readily conclude that $\lVert(\nu^{-1})^{*n}- g* (\nu^{-1})^{*n}\rVert \leq 4/n + 4\lVert \eta_n\rVert$, as soon as $g \in A_n$, so the assumption of Proposition \ref{lem: KV invariance criterion} is fulfilled, since $A_i$ is a growing sequence of finite subsets whose union is whole $G$.
\end{proof}

Now we will show that the tail boundary for the $\nu$-random walk is nontrivial. For this we will construct a tail-measurable function $\tau$ and show that its image is not essentially a one-point set. We will also demonstrate that the natural $G$-action on the boundary of the $\nu$-random walk on $G$ is essentially free. Moreover, we will show that function $\tau$ actually defines the boundary (its image could be identified with the factor-space of the Poisson-Furstenberg boundary).

Denote $W_n = A_n^{n+1} b_n F_n^{-1} A_n^n$ and $W'_n = A_n^n b_n F_n^{-1} A_n^n$ for all $n \in \N$. Also set $W_0 = G \setminus \bigcup_{i \in \N} W_i$. Trivially, $W'_n \subset W_n$ for all $n$ (since $e \in A_n$).

For $\gamma \in G$ we define $\rank(\gamma)$ to be the unique $i$ such that $\gamma \in W_i$ or $0$ if there are no such $i$.

Let $p: \bigcup_n W_n \to G$ be a function defined by the formula $p(q' b_n f^{-1} q'') = q'$, where $q' \in A_n^{n+1}$, $q'' \in A_n^n$, $f \in F_n$. Note that $p$ is defined properly since by construction $b_n$ is an $D_n^{10n+10}$-lock, and $D_n = F_n^{-1} \cup F_n \cup A_n \cup \{c_n\} \cup\{c_n^{-1}\}$.

\begin{lem}
We have the following:
\begin{enumerate}
\item $W_i \cap W_j = \varnothing$ for $i \neq j$;
\item $W'_i \subset W_i$, for $i > 0$;
\item for any $\gamma \in W'_i$ and $\gamma_1 \in A_n$ for $i \geq n$ we have $p(\gamma_1\gamma) = \gamma' p(\gamma)$;
\item if $\rank(\gamma) > 0$ then $\rank(p(\gamma)) < \rank(\gamma)$;
\end{enumerate}
\end{lem}
\begin{proof}
For (1) we assume that $i < j$. Remind that $b_j$ is a $D_j^{10j + 10}$-lock, and $W_i \subset D_i^{10i + 10} \subset D_j^{10j + 10}$, so the required follows by the definition of a lock.

For (2) we just use that $e \in A_i$ for all $i$.

For (3) we note that $\gamma = q' b_i f^{-1} q''$ for some $q', q'' \in A_i^i$ and $f \in F_i$, so $\gamma_1 \gamma =  \gamma_1 q' b_i f^{-1} q''$. We get that $p(\gamma) = q'$ and $p(\gamma_1 \gamma) = \gamma_1 q'$.

For (4) we note that if $\rank(\gamma) = i > 0$ then $\gamma \in W_i$, so $p(\gamma) \in A_i^{i+1}$, but $A_i^{i+1}$ is disjoint from $W_j$ for all $j \geq i$, by definition of the lock $b_j$.

\end{proof}

For every $\gamma \in G$ let us define $t(\gamma)$ to be the sequence $(s_i)_{i = 1}^m$ such that $m$ is the smallest with $p^m(\gamma) \in W_0$ and $s_i = p^{m-i+1}$, for $i = 1, \ldots, m$ (the sequence is empty if $\gamma \in W_0$). In other words, $t(\gamma) = (p^m(\gamma),\ldots, p^2(\gamma), p(\gamma))$.

For two sequences $\sigma_1, \sigma_2$ we will write $\sigma_1 \subset \sigma_2$ if $\sigma_1$ is an initial segment of $\sigma_2$ (in particular, we might have $\sigma_1 = \sigma_2$).
If $\sigma_1 \subset \sigma_2 \subset \ldots$ is a growing sequence of sequences, we denote $\bigcup_i \sigma_i$ the minimal sequence (possibly infinite) $\sigma'$ such that $\sigma_i \subset \sigma'$ for all $i$.
For a finite sequence $\sigma$ and an element $s$ we will denote $\sigma \sqcup s$ the sequence $\sigma$ with $s$ attached at the end.  

Remind that we consider the random walk defined by $Z_i = X_1 \cdot \ldots \cdot X_i$.
\begin{lem}
For a.e. trajectory $(z_i)_{i=0}^{\infty}$ of the $\nu$-random walk, there is an $N$ such that:
\begin{enumerate}
\item for every $i > N$ there is $j$ such that $z_i \in W'_j$;
\item for every $i > N$ either $t(z_{i+1}) = t(z_i)$ or $t(z_{i+1}) = t(z_i) \sqcup z_i$;
\item for every $i,j > N$ with $i < j$ we have $t(z_i) \subset t(z_j)$;
\item the sequence $\lvert t(z_i) \rvert$ is unbounded.
\end{enumerate}
\end{lem}

\begin{proof}
It will be convenient for us to consider the trajectory $(k_i, x_i, y_i)_{i \in \N}$ of the extended triplet-process $(K_i, X_i, Y_i)_{i \in \N}$, and the corresponding trajectory of the random walk $z_i = x_1 \cdot \ldots \cdot x_i$.
Assume that trajectory $(k_i, y_i)_{i \in \N}$ stabilizes (this happens, by Lemma \ref{lem: basic stabilization}, with probability $1$), let $N$ be the first record-time that is bigger than the stabilization time. We note that by definition of the stabilization time, if $i > N$ and $1 \leq j \leq i$ is the record on the segment $1, \ldots i$, i.e.  $k_j = \max\lbrace k_1, \ldots, k_i\rbrace$, then $z_i = x_1 \cdot x_{j-1} \cdot b_{k_j} f \cdot x_{j+1} \cdot \ldots \cdot x_i$, where $x_1,\ldots, x_{j-1}, x_{j+1}, x_i \in A_{k_j}$ and $f \in F_{k_j}^{-1}$ (since the record $k_j$ is simple and $y_j =$ ``blue''). We get trivially that $z_i \in W'_{k_j}$ (since $k_j > i$, by definition of the stabilization time), this gives us (1). 

Now consider $k_{i+1}$.  One possibility is $k_{i+1} < k_{j}$ (the record is not beaten) and then we get that  $z_{i+1} = x_1 \cdot x_{j-1} \cdot b_{k_j} f \cdot x_{j+1} \cdot \ldots \cdot x_i \cdot x_{i+1}$, where $x \in A_{k_j}$, so it is easy to check that $p(z_{i+1}) = p(z_i) = x_1 \cdot \ldots \cdot x_{j-1} = z_{j-1}$ (note that $i+1 < k_j$ since $k_j$ is a record on a segment $1,\ldots, i+1$ after the stabilization time), hence $t(z_{i+1}) = t(z_i)$. 

Another possibility is that $k_{i+1} > k_{j}$ (the option $k_{i+1} = k_{j}$ is excluded by the definition of the stabilization time: the record $k_j$ should be simple). In this case we get $z_{i+1} = x_1 \cdot \ldots \cdot x_i \cdot x_{i+1}$ where $x_{i+1} = b_{k_{i+1}} f'$ and $f \in F^{-1}_{k_{i+1}}$ (we remind that $y_{i+1}$=``blue'' by definition of the stabilization time). Note that $x_1, \ldots , x_i \in A_{k_{i+1}}$ $p(z_{i+1}) = x_1 \cdot \ldots \cdot x_i = z_i$, and in this case 
$$t(z_{i+1}) = t(p(z_{i+1})) \sqcup p(z_{i+1}) = t(z_i) \sqcup z_i.$$
Thus we get (2), and (3) trivially follows by induction.

For (4) is is enough to note that we get a new element in the sequence every time we get a new record, this trivially happens infinitely often after the stabilization time (since the record on segment $1, \ldots , i$ should be bigger than $i$ for all $i > N$).

\end{proof}

We define a function $\tau$ on a Borel set of full measure of trajectories $(z_i)$
by $\tau((z_i)_{i=1}^{\infty}) = \bigcup_{i > N} t(z_i)$. Note that for almost every $\zeta$, $\tau(\zeta)$ is an infinite sequence.

The observations of the next Lemma are almost trivial consequences of the previous one.
\begin{lem}\label{lem: tau properties}
Let $\tau(\zeta) = (s_i)_{i=1}^{\infty}$. For a.e. trajectory $\zeta= (z_i)_{i=0}^{\infty}$ of the random walk given by the distribution $\nu$,
\begin{enumerate}
\item $\tau$ is a tail-measurable sequence;
\item $s_i = p(s_{i+1})$ for all $i \in \N$;
\item we have $0 = \rank(s_1) < \rank(s_2) < \ldots$;
\item there is $N \in \N$ and a growing sequence $(q_i)$ such that $s_{i+N} = z_{q_i}$ for all $i > 0$;
\item there is $N \in \N$ such that for all $i>N$ we have $s_i \in W'_{\rank(s_i)}$.
\end{enumerate}
\end{lem}

Consider the space $\Omega = G^{\N}$ endowed with  the distribution $\mu$ of the random walk process $(Z_i)_{i \in \N}$. We define the action of $G$ on $\Omega$ by $g \cdot (z_1, z_2, \ldots ) = (g \cdot z_1, g \cdot z_2, \ldots)$. Observe that this action is non-singular, i.e. $g \cdot \mu \ll \mu$ for all $g \in G$. To see this we note that $\nu \ll g \cdot \nu$ (we remind that $\nu$ has full support) and that the pair of measures  $\mu$, $g \cdot \mu$ are obtained from the pair $\nu$, $g \cdot \nu$ by application of the same Markov operator. The important consequence is that if a measurable subset $A \subset \Omega$ has full measure, then for every $g$ and for $\mu$-almost every $\zeta \in \Omega$, we have that both $\zeta$ and  $g \cdot \zeta$ lie in $A$.

\begin{lem}
For almost every trajectory $\zeta = (z_i)$ and for all $g \in G$ there are numbers $n,m > 0$ such that for all $i \geq 0$ we have $(\tau(g \cdot \zeta))_{i+n} = g \cdot (\tau(\zeta))_{i+m}$.
\end{lem}
\begin{proof}
We may assume that $\tau$ is defined for $\zeta$ and $g \cdot \zeta$ and the statement of the previous lemma holds for both, since the action of $G$ is non-singular.
Let $\tau(\zeta) = (s_i)_{i \in \N}$. Observe that $\tau(\zeta) = \bigcup_{i \in \N} t(s_i)$ and for some $N>0$ we have $\tau(g \cdot \zeta) = \bigcup_{i>N} t(g \cdot s_i)$.

Let $q > 0$ be such that $g \in A_q$. Let $m$ be the smallest number such that $\rank(s_m) \geq q$. We observe that for every $i \geq m$

$$
p(g s_{i+1}) = g p(s_{i+1}) = g s_i,
$$
and so
\begin{equation}\label{eq: trec}
t(g s_{i+1}) = t(p(g s_{i+1})) \sqcup p(g s_{i+1}) = t(g s_i) \sqcup g s_i.
\end{equation}
Set $n = \lvert t(g s_{m+1})\rvert$.
Comparing \ref{eq: trec} with 
\begin{equation}\label{eq: trec2}
t(s_{i+1}) = t(s_i) \sqcup s_i,\text{ for }i \geq m,
\end{equation}
we conclude, by induction, that for all $u \geq m$ and for all $i \geq 0$ such that $m+i \leq  u$, we have
\begin{equation}\label{eq: trec2}
(t(g \cdot s_{u+1}))_{n+i} = g \cdot (t(s_{u+1}))_{m+i},
\end{equation}

Now, we remind that there is a growing sequence $(q_i)_{i \in \N}$ such that $s_i = z_{q_i}$, so 

\begin{equation}\label{eq: union1}
\tau(\zeta) = \bigcup_{j > m} t(s_{j+1})
\end{equation}
and 

\begin{equation}\label{eq: union2}
\tau(g \cdot \zeta)  =\bigcup_{j \geq M} t(g \cdot z_j) = \bigcup_{j \geq N} t(g \cdot z_{q_j}) = \bigcup_{j \geq N} t(g \cdot s_j),
\end{equation}
for some $M,N$.

Combining \ref{eq: union1} and \ref{eq: union2} with \ref{eq: trec2}, we get the desired statement.
\end{proof}

\begin{theor}
The action of group $G$ on the tail boundary $\partial(G, \nu)$ is essentially free.
\end{theor}
\begin{proof}
For almost every $\zeta \in \Omega$ and every $g \in G$ there are $m,n$ such that $g \cdot (\tau(\zeta))_{m+i} = (\tau(g \cdot \zeta))_{n+i}$ for all $i \in \N$.
Also, for big enough $i$ we have $(\tau(\zeta))_{m+i} \in W'_j$ for some $j$ such that $g \in A_j$, and hence  $g \cdot (\tau(\zeta))_{m+i}$ is the unique element of the sequence $(\tau(g \cdot \zeta))_{n+i}$ that lies in $W_j$. On the other hand, $(\tau(\zeta))_{m+i}$ is the unique element of the sequence $\tau(\zeta)$ that lies in $W_j$. Hence, $\tau(\zeta) \neq \tau(g \cdot \zeta)$ for a.e. trajectory $\zeta \in \Omega$ and every $g \in G$, $g \neq e$ (since $G$ is countable). We are done since $\tau$ is tail-measurable.
\end{proof}

For any measurable map between measurable spaces $\Lambda \to \Omega$ there is a minimal subalgebra on $\Lambda$ such that the map becomes measurable. We say that this subalgebra is generated by the map.
The following lemma shows that we explicitly characterized the (non-trivial) boundary for our example.

\begin{lem}
The subalgebra generated by the map $\tau$ is isomorphic (up to sets of measure zero) to the subalgebra $\partial(G, \nu)$.
\end{lem}
\begin{proof}
We note that by Lemma \ref{lem: tau properties}, for almost every trajectory $\zeta = (z_1, z_2, \ldots)\in \Omega$ there is growing sequence $(q_i)$ of natural numbers and $N$ such that $s_{i+N} = z_{q_i}$ for all $i \in \N$, where $\tau(\zeta) = (s_1, s_2, \ldots)$. Now the lemma follows from the identification of the boundary with the class of traps, see \cite[Proposition 0.1]{KaVe83}. More precisely, for any subset $E$ of the tail subalgebra there is a subset $\bar{E}$ of $G$ such that $\zeta \in E$ is equivalent to $z_i \in \bar{E}$ for all but  finitely many $i$, for almost all $\zeta$. This means that $E$ is measurable with respect to the subagebra of $\Omega$ generated by function $\tau$.
\end{proof}

\end{document}